\documentclass{amsart}

\usepackage{amsmath}
\usepackage{amsfonts}
\usepackage{amssymb,enumerate}
\usepackage{amsthm}
\usepackage[all]{xy}
\usepackage{hyperref}

\newtheorem{lem}{Lemma}[section]
\newtheorem{cor}[lem]{Corollary}
\newtheorem{Fact}[lem]{Fact}
\newtheorem{prop}[lem]{Proposition}
\newtheorem{thm}[lem]{Theorem}

\newtheorem{Defn}[lem]{Definition}
\newtheorem{Notn}[lem]{Notation}
\newtheorem{Ex}[lem]{Example}
\newtheorem{Question}[lem]{Question}
\newtheorem{Property}[lem]{Property}
\newtheorem{Properties}[lem]{Properties}
\newtheorem{Discussion}[lem]{Remark}
\newtheorem{Construction}[lem]{Construction}
\newtheorem{Subprops}{}[lem]
\newtheorem{Para}[lem]{}

\newenvironment{fact}{\begin{Fact}\rm}{\end{Fact}}

\newenvironment{disc}{\begin{Discussion}\rm}{\end{Discussion}}

\newcommand{\ideal}[1]{\mathfrak{#1}}
\newcommand{\m}{\ideal{m}}
\newcommand{\n}{\ideal{n}}

\newcommand{\q}{\ideal{q}}

\newcommand{\coker}{\operatorname{Coker}}

\newcommand{\Ker}{\operatorname{Ker}}

\renewcommand{\geq}{\geqslant}
\renewcommand{\leq}{\leqslant}
\renewcommand{\ker}{\Ker}

\begin{document}

\bibliographystyle{amsplain}

\author{Tirdad Sharif}
\address{Tirdad Sharif, School of Mathematics, Institute for Studies in
Theoretical Physics and Mathematics, P. O. Box 19395-5746, Tehran,
Iran} \email{sharif@ipm.ir}
\urladdr{http://www.ipm.ac.ir/IPM/people/personalinfo.jsp?PeopleCode=IP0400060}
\thanks{The author was supported by a grant from IPM, (No. 83130311).}

\title[A tight closure approach]{A tight closure approach\\to a result of G. Faltings}


\keywords{Complete Intersection Algebras, Deformation Algebras,
Hecke Algebras, Tight Closure} \subjclass[2000]{13A35, 13H10}

\begin{abstract} Using a result of M. Hochster and C. Huneke on $F$-rational rings
a criterion for complete intersection rings of characteristic $p>0$
is presented. As an application, we give a completely different
proof for an algebraic result of G. Faltings that was used by Taylor
and Wiles in \cite{TW} for a simplification of the proof of the
minimal deformation problem.
\end{abstract}

\maketitle

\section{Introduction}
The theory of tight closure was created by Melvin Hochster and Craig
Huneke. An important notion in the theory of tight closure is the
notion of \emph{$F$-rationality}. There is a close connection
between this notion and \emph{rational singularity}, see \cite{sm}.

The main purpose of this note is to use a result of M. Hochster and
C. Huneke in \cite{HH1} on $F$-rational rings, to give a criterion
for complete intersection rings of characteristic $p>0$, see
Theorem~\ref{TS2} for the precise statement.

In \cite{TW} G. Faltings proved an algebraic result that was used by
Taylor and Wiles to reprove the \emph{minimal deformation problem}
by a simpler method. As an application of Theorem~\ref{TS2} we prove
Proposition~\ref{TS4}, which is an extension of Faltings result. Our
approach to prove this proposition is fundamentally different from
Faltings method where he simplifies Taylor and Wiles argument, see
Remark~\ref{TS3}.

In order to give a view for the reader we recall the minimal
deformation problem very briefly.

The modularity conjecture for semistable elliptic curves depends on
a critical conjecture of Wiles that was proved in \cite{TW,W}, see
Conjecture (2.16) of \cite{W}.

Let $\mathcal{D}$ be a deformation theory and let $R_{\mathcal{D}}$
and $T_{\mathcal{D}}$ be the universal deformation and Hecke
algebras associated to $\mathcal{D}$, respectively. The universal
property of $R_{\mathcal{D}}$ implies that there is a homomorphism
$\varphi_{\mathcal{D}}:R_{\mathcal{D}}\rightarrow T_{\mathcal{D}}$.
Wiles conjecture asserts that $\varphi_{\mathcal{D}}$ is an
isomorphism. In the special case, if $\mathcal{D}$ is a minimal
deformation theory, then the above conjecture is called the minimal
deformation problem. For different types of deformation theories and
their relations see Chapter 1 and 2 of \cite{W}.

Wiles method in \cite{W} to prove the above conjecture, was closely
related to show that certain Hecke algebras are complete
intersection that was shown in \cite{TW}.

There is no need for the reader to know the number theoretic
materials of Wiles proof in detail. We refer the interested readers
to \cite{R} for an elegant exposition of the above matters.

\section{Definitions, Notations and The Main Theorem}\label{TS0}
Throughout this note all rings are commutative and Noetherian of
characteristic $p>0$ and all of modules are finite (that is,
finitely generated).

In this note we use the notations $\nu_R(M)$ and $\ell_R(M)$
respectively for the minimal number of generators of $M$ and the
length of $M$ over $R$.

\begin{Defn}\label{C} Let $(R,\m,k)$ be a local ring and let
$R[[X]]=R[[X_1,...,X_n]]$ be the formal power series of $n$
variables over $R$. Write $A=R[[X]]/J$ for an ideal $J$ of $R[[X]]$.
If the local ring $A\otimes_{R}k$ is complete intersection, then $A$
is called a complete intersection $R$-algebra.
\end{Defn}

\begin{Defn} Let $R$ be a ring and let $J=(a_1,...,a_t)$ be an ideal of $R$.
An element $x\in R$ is said to be in the tight closure of $J$ and
write $x\in J^*$ if there is an element $c\in R^0$ such that for all
large $q=p^n$ we have $cx^q\in J^{[q]}$, wherein $R^0$ is the
complement of the union of all minimal primes of $R$ and
$J^{[q]}=(a_1^q,...,a_t^q)$.

An ideal $J$ is called tightly closed if $J=J^*$. The ring $R$ is
called $F$-rational if every parameter ideal of $R$ is tightly
closed and in particular, if every ideal of $R$ is tightly closed,
then it is called $F$-regular.
\end{Defn}

\begin{fact}\label{F} Let $Q$ be a regular local ring, then it is
$F$-regular, see \cite[(4.4)]{HH}.
\end{fact}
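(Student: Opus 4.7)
The plan is to show directly that every ideal $J$ of $Q$ is tightly closed. Since $Q$ is a regular local ring it is a domain, so $Q^0 = Q\setminus\{0\}$. Fix $x\in J^*$: by definition there is some $c\ne 0$ in $Q$ with $cx^q\in J^{[q]}$ for all large $q=p^e$. The goal is to conclude that $x\in J$.

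The principal input is Kunz's theorem, which says that the Frobenius endomorphism $F\colon Q\to Q$ is flat precisely because $Q$ is regular. From this flatness one extracts the colon identity
\[
J^{[q]} : x^q \;=\; (J : x)^{[q]}
\]
for every ideal $J$ and element $x$. I would derive this by applying flat base change along $F^e$ to the short exact sequence $0\to Q/(J:x)\xrightarrow{\;x\;}Q/J\to Q/(J+xQ)\to 0$, identifying the resulting map with multiplication by $x^q$ on $Q/J^{[q]}$, and reading off the kernel.

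Given the colon identity, the membership $cx^q\in J^{[q]}$ rewrites as $c\in (J:x)^{[q]}$ for all large $q$. Now proceed by contradiction: if $x\notin J$, then $J:x$ is a proper ideal of the local ring $(Q,\m)$, hence $J:x\subseteq \m$, and consequently $(J:x)^{[q]}\subseteq \m^{[q]}\subseteq \m^q$. This forces $c\in\bigcap_q \m^q = 0$ by Krull's intersection theorem, contradicting $c\ne 0$. Therefore $x\in J$, so $J=J^*$, and $Q$ is $F$-regular.

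The main obstacle is establishing the colon-ideal identity $J^{[q]}:x^q=(J:x)^{[q]}$; once that is in hand the remainder is a one-line application of Krull's intersection theorem. The identity in turn rests squarely on Kunz's characterization of regularity by flatness of Frobenius, which is the only nontrivial ingredient specific to the regular hypothesis.
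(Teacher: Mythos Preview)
Your argument is correct and is in fact the standard proof: Kunz's theorem gives flatness of Frobenius, flatness yields the colon-capturing identity $J^{[q]}:x^q=(J:x)^{[q]}$, and then Krull's intersection theorem forces $c=0$ unless $x\in J$. The paper itself does not supply a proof of this fact at all; it simply records it with a citation to \cite[(4.4)]{HH}, and the argument you have written is essentially the one found there.
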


To state the main theorem of this note we will make use of the
following simple lemma the proof of which is omitted.

\begin{lem}\label{TS1} Let I be an ideal of R, then $(I^*)^n\subseteq(I^n)^*$ for
$n\geq1$.
\end{lem}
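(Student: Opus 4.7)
The plan is to reduce to showing that every generator of $(I^*)^n$ lies in $(I^n)^*$, since tight closure of an ideal is itself an ideal. A typical generator of $(I^*)^n$ is a product $x_1 x_2 \cdots x_n$ with each $x_i \in I^*$, so it suffices to produce a single element $c \in R^0$ witnessing $x_1 \cdots x_n \in (I^n)^*$.

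For each $i$, pick $c_i \in R^0$ so that $c_i x_i^q \in I^{[q]}$ for all large $q = p^e$. Set $c := c_1 c_2 \cdots c_n$. Since no minimal prime of $R$ contains any $c_i$ and minimal primes are prime, none contains the product, so $R^0$ is multiplicatively closed and $c \in R^0$. Multiplying the $n$ defining inclusions together gives, for all large $q$,
$$
c\,(x_1 x_2 \cdots x_n)^q \;=\; (c_1 x_1^q)(c_2 x_2^q)\cdots(c_n x_n^q) \;\in\; \bigl(I^{[q]}\bigr)^n.
$$

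The key (and only nontrivial) algebraic identity I need is $(I^{[q]})^n = (I^n)^{[q]}$. Writing $I = (a_1, \ldots, a_t)$, the ideal $(I^{[q]})^n$ is generated by $n$-fold products $a_{i_1}^q a_{i_2}^q \cdots a_{i_n}^q$, while $(I^n)^{[q]}$ is generated by the $q$-th powers $(a_{i_1} a_{i_2} \cdots a_{i_n})^q = a_{i_1}^q a_{i_2}^q \cdots a_{i_n}^q$ of the generators of $I^n$; the two generating sets coincide. Hence $c(x_1 \cdots x_n)^q \in (I^n)^{[q]}$ for all large $q$, so $x_1 \cdots x_n \in (I^n)^*$, completing the proof.

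There is no real obstacle here; the argument is a direct manipulation of the definition. The only potential pitfall is the closure of $R^0$ under products (handled by the minimal-prime characterization) and the Frobenius-power identity above, both of which are immediate.
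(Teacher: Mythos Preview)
Your argument is correct and is the standard verification from the definition. The paper itself omits the proof of this lemma entirely (``the proof of which is omitted''), so there is no approach to compare against; your write-up supplies exactly the routine check the authors suppressed.
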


\begin{thm}\label{TS2} Let $(A,\n,k)$ be a local ring of dimension $t$
and let $\q$ be an ideal of $A$, generated by a system of
parameters. Let $\eta\colon A\to Q$ and $\theta\colon Q\to B$ be
local ring homomorphisms such that $Q$ is regular of dimension $t$,
$\theta$ is surjective, and $\eta$ induces an isomorphism between
the residue fields of $A$ and $Q$. Set $\delta =\theta\eta\colon A
\to B$ and $\ell(B/\q^* B)=d$. Assume for an integer
$q=p^n>d^{t}t^{t-1}$ we have $\ell(B/{\q}^{[q]}B)\geq
\ell(A/{\q}^{[q]})$. If the ideal ${\q}^{[q]}$ is tightly closed,
then the canonical homomorphism $\pi:Q/q^*Q\rightarrow B/\q^* B$ is
an isomorphism.

\noindent In particular, if $A$ is equidimensional homomorphic image
of a Cohen-Macaulay ring, then $\pi$ is an isomorphism between
complete intersection rings.
\end{thm}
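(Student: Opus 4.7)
The plan is to show that $\pi$ — which is surjective since $\theta$ is — is also injective, i.e., $\ker\theta\subseteq \q^*Q$. I would proceed in two stages: first, exploit the tight-closure hypothesis on $\q^{[q]}$ together with Kunz's flatness of Frobenius on $Q$ to collapse $\q^*Q$ down to $\q Q$; second, use the Frobenius-length inequality in the hypothesis together with Lemma~\ref{TS1} to force the remaining length equality.

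For the first stage, the key identity is $(\q^*)^{[q]}=\q^{[q]}$ in $A$. The inclusion $\q^{[q]}\subseteq(\q^*)^{[q]}$ is immediate from $\q\subseteq \q^*$; for the reverse, the standard fact $(I^*)^{[q]}\subseteq(I^{[q]})^*$ (verified directly from the definition of tight closure) combined with the hypothesis $(\q^{[q]})^*=\q^{[q]}$ gives
\[
\q^{[q]}\subseteq (\q^*)^{[q]}\subseteq(\q^{[q]})^*=\q^{[q]}.
\]
Extending to $Q$, one obtains $(\q^*Q)^{[q]}=\q^{[q]}Q=(\q Q)^{[q]}$. Since $Q$ is regular, Kunz's theorem makes the Frobenius $F^q:Q\to Q$ faithfully flat, and hence the operator $I\mapsto I^{[q]}$ is injective on ideals of $Q$; therefore $\q^*Q=\q Q$. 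Pushing this down through the surjection $\theta$ yields $\q^*B=\q B$. In particular $Q/\q^*Q=Q/\q Q$ is a complete intersection (regular modulo a parameter ideal), and $\pi$ is identified with the canonical surjection $Q/\q Q\to B/\q B$.

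For the second stage, injectivity of $\pi$ amounts to $\ker\theta\subseteq \q Q$, for which the stronger inclusion $\ker\theta\subseteq \q^{[q]}Q$ suffices (since $\q^{[q]}\subseteq \q$). Equivalently, the induced surjection $Q/\q^{[q]}Q\to B/\q^{[q]}B$ must be an isomorphism, i.e.,
\[
\ell(B/\q^{[q]}B) = \ell(Q/\q^{[q]}Q) = q^t\ell(Q/\q Q),
\]
the second equality by Kunz. The upper bound $\ell(B/\q^{[q]}B)\leq q^t\ell(Q/\q Q)$ is automatic. For the matching lower bound I would combine the hypothesis $\ell(B/\q^{[q]}B)\geq \ell(A/\q^{[q]})$ with Lemma~\ref{TS1}: the pigeonhole $\q^{t(q-1)+1}\subseteq \q^{[q]}$ (valid since $\q$ has $t$ generators) together with Lemma~\ref{TS1} gives $(\q^*)^{t(q-1)+1}\subseteq(\q^{[q]})^*=\q^{[q]}$, so using $\n_B^d\subseteq \q^*B=\q B$ (from $\ell(B/\q^*B)=d$) one obtains $\n_B^{d(t(q-1)+1)}\subseteq \q^{[q]}B$. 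A Hilbert--Samuel estimate on $\ell(B/\n_B^n)$ then provides a polynomial-in-$q$ upper bound for $\ell(B/\q^{[q]}B)$, and the numerical threshold $q>d^t t^{t-1}$ is exactly what is needed so that this estimate, combined with the hypothesis, pins $\ell(B/\q^{[q]}B)$ at $q^t\ell(Q/\q Q)$.

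The principal obstacle is this final calibration: extracting a sharp length equality from the single Frobenius inequality via Hilbert--Samuel bookkeeping, in a way that is tight enough to consume the threshold $q>d^t t^{t-1}$ and to handle the possibility that $\q^*$ requires more than $t$ generators. The ``in particular'' clause then follows at once, since once $\pi$ is an isomorphism, $B/\q^*B\cong Q/\q Q$ is a complete intersection.
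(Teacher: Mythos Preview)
Your Stage~1 is correct and is actually a pleasant observation the paper does not make: from $(\q^*)^{[q]}\subseteq(\q^{[q]})^*=\q^{[q]}\subseteq(\q^*)^{[q]}$ and Kunz's theorem one does get $\q^*Q=\q Q$, so the target $Q/\q^*Q$ is already visibly a complete intersection once $\pi$ is known to be an isomorphism. (This even shows the ``in particular'' clause needs no extra hypothesis on $A$.)

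The genuine gap is Stage~2. The ``stronger inclusion'' $\ker\theta\subseteq \q^{[q]}Q$ that you set as your goal is simply false under the hypotheses. Take $A=k[[s,t]]$, $Q=k[[x,y]]$, $\eta(s)=x$, $\eta(t)=y^2$, $\q=(s,t)$, and $B=Q/(y^q)$. Then $d=\ell(B/\q B)=2$, the threshold is $q>8$, $\q^{[q]}$ is tightly closed, and $\ell(B/\q^{[q]}B)=\ell\bigl(k[[x,y]]/(x^q,y^q)\bigr)=q^2=\ell(A/\q^{[q]})$, so every hypothesis holds; yet $J=(y^q)\not\subseteq(x^q,y^{2q})=\q^{[q]}Q$, and correspondingly $\ell(B/\q^{[q]}B)=q^2\neq 2q^2=\ell(Q/\q^{[q]}Q)$. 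So the length equality you are trying to ``pin'' cannot be pinned.

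Even if you retreat to the correct target $J\subseteq\q Q$, your Hilbert--Samuel estimate on $B$ is too blunt: it yields only $q^t\leq\ell(B/\n_B^{N})\leq\ell(Q/\m^{N})\approx N^t/t!$ with $N\approx dtq$, which gives no contradiction. The paper's decisive idea, which your outline lacks, is to work in $Q$ rather than $B$: assuming $J\not\subseteq\m^{d+1}$, choose $u\in J\setminus\m^{d+1}$ and compare kernel and cokernel of multiplication by $u$ on $Q/\m^{c}$ with $c=tdq$. Because $\operatorname{gr}_\m Q$ is a polynomial ring, $u\notin\m^{d+1}$ forces $\ker\subseteq\m^{c-d}/\m^c$, whose length is at most $d\binom{c+t-2}{t-1}\leq d^t t^{t-1}q^{t-1}$; the cokernel surjects onto $B/\q^{[q]}B$ and hence has length at least $q^t$. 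Equality of these lengths contradicts $q>d^t t^{t-1}$. From $J\subseteq\m^{d+1}$ and $\m^d\subseteq J+\q^*Q$ one then gets $\m^d+\q^*Q=\m^{d+1}+\q^*Q$, and Nakayama finishes: $J\subseteq\m^{d+1}\subseteq\q^*Q$.
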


\begin{proof}
Write $\m$ for the maximal ideal of $Q$ and $J=\Ker\theta.$ We may
assume that $B=Q/J$. From the assumptions it follows that
$\m^d\subseteq J+{\q}^*Q.$ Therefore we have the next inclusions
$\m^{tqd}\subseteq (J+{\q}^*Q)^{tq}\subseteq (J+(\q^*)^{tq}Q)$.
Lemma ~\ref{TS1} implies that $\m^{tqd}\subseteq J +(\q^{tq})^*Q$.
Since $\q$ is generated by $t$ elements it is easy to see that
${\q}^{tq}\subseteq {\q}^{[q]}$. This yields that $\m^{tqd}\subseteq
J+(\q^{[q]})^*Q$ and from our assumptions we have $\m^{tqd}\subseteq
J+\q^{[q]}Q$.

\noindent We claim that $J\subseteq \m^{d+1}$, otherwise choose $u$
in $J$ not in $\m^{d+1}$ and consider the following exact sequence
$$0\longrightarrow \Ker(\alpha)\longrightarrow Q/\m^{c}\stackrel{\alpha}\longrightarrow
Q/\m^{c}\longrightarrow \coker(\alpha)\longrightarrow
0\hspace{1in}(2.5.1)
$$
wherein the map $\alpha$ is the homothety by element $u$ and
$c=tdq$. Obviously, there is a surjective map as the following
$$\coker(\alpha)=Q/(uQ+\m^c)\longrightarrow
Q/(J+{\q}^{[q]}Q)=B/\q^{[q]}B.
$$
Therefore $\ell_Q(\coker(\alpha))\geq \ell_Q(B/{\q}^{[q]}B)$. It is
clear that $B/{\q}^{[q]}B$ has finite length over $Q$. The ideal
$\q$ is an $\n$-primary ideal of $A$, therefore $\ell_{A}
(B/{\q}^{[q]}B)$ is finite and since $k=A/\n\simeq Q/\m$, clearly
the length of $B/{\q}^{[q]}B$ over both of $Q$ and $A$ are equal. By
assumptions $\ell(B/{\q}^{[q]}B)\geq \ell(A/{\q}^{[q]})$, therefore
we have
$\ell_Q(\coker(\alpha))\geq\ell(B/{\q}^{[q]}B)\geq\ell(A/{\q}^{[q]})$
and from \cite[(14.10)]{M} we find that $\ell(A/\q^{[q]})\geq q^t
e(A)\geq q^t$ wherein the symbol $e(A)$ is the multiplicity of $A$.
Thus $\ell_Q(\coker(\alpha))\geq q^t$.

Let $x_1,x_2,...,x_t$ be a minimal generator for $\m.$ The local
ring $Q$ is regular, so the associated graded ring $gr_{\m}(Q)$ is a
polynomial ring in $t$ variables over $k$, see \cite[Page 76,Theorem
9(d)]{se}. This fact implies that $x_j$ for $1\leq j\leq t$ are
analytically independent in $\m$ and so that
$\nu_Q(\m^{c-i})={c-i+t-1\choose t-1}$ for $i\geq 1$. Since $u$ is
not an element of $\m^{d+1}$ therefore $\ker(\alpha)\subseteq
\m^{c-d}/\m^c$ and hence
$\ell_Q(\ker(\alpha))\leq\ell_Q(\m^{c-d}/\m^c)$.

\noindent The following equality is clear
$$\ell_Q(\m^{c-d}/\m^c)=\displaystyle{\sum_{i=1}^{d}}\nu_Q(\m^{c-i}).\hspace{1in}(2.5.2)$$
Now from (2.5.1) we have $\ell_Q(\ker\alpha)=\ell_Q(\coker\alpha)$
and from (2.5.2) we find that $\ell_Q(\m^{c-d}/\m^c)\leq
d{c+t-2\choose t-1}\leq dc^{t-1}=d(tdq)^{t-1}.$ Therefore $q^t\leq
d^{t}t^{t-1}q^{t-1}$ and this contradicts the choice of $q$. Hence
we must have $J\subseteq \m^{d+1}$ and consequently $\m^d\subseteq
J+{\q}^* Q\subseteq \m^{d+1}+{\q}^*Q.$ Thus
$\m^d+{\q}^*Q=\m^{d+1}+{\q}^*Q$ and by using Nakayama's Lemma
$\m^{d+1}\subseteq {\q}^*Q$. This yields that $J\subseteq {\q}^*Q$
and this shows that the canonical homomorphism $\pi : Q/{\q}^*Q
\rightarrow B/{\q}^*B$ is an isomorphism.

Now let $A$ be an equidimensional ring which is homomorphic image of
a Cohen-Macaulay ring. In this case, since ${\q}^{[q]}$ is generated
by a system of parameters and it is tightly closed it follows from
\cite[(4.2.e)]{H} that $A$ is $F$-rational and hence $\q ={\q}^*$.
Now it is easy to see that $\pi$ is an isomorphism between complete
intersection rings.
\end{proof}

\begin{disc}\label{TS3} Let $(\mathcal{O},\m,k)$ be a complete discrete
valuation ring with finite residue field $k$ and let $\mathcal{D}$
be a minimal deformation theory. Let $R$ and $T$ be the universal
deformation and Hecke $\mathcal{O}$-algebras associated to
$\mathcal{D}$, respectively and let $\varphi=\varphi_{\mathcal{D}}$
be the homomorphism described in Section 1. Taylor and Wiles in the
appendix of \cite{TW} for a simplification of some arguments in
Section 3 of \cite{TW} and Chapter 3 of \cite{W} use a commutative
algebraic result of G. Faltings to reprove that $\varphi$ is an
isomorphism between complete intersection algebras. In
Proposition~\ref{TS4} and Corollary~\ref{TS6} we give an extension
of corresponding results due to Taylor-Wiles and Faltings without
the assumption of finiteness of $k$. Our method to prove the above
results is totally different from their methods.

\noindent In the following we refine the definition of a
\emph{(level) n-structure} due to Wiles and Faltings in \cite{TW}.
\end{disc}

\begin{Defn} Let $(\mathcal{O},\m,k)$ be a local ring
 and let $\mathcal{O}[[Y]]=\mathcal{O}[[Y_1,...,Y_t]]$.
 Set $\q=(Y_1,...,Y_t)$ and assume that for an integer $n\geq 1$ we have a
 commutative diagram of local $\mathcal{O}$-algebras as the
 following in which $\varphi$ is surjective and $T$ is a finite free
$\mathcal{O}$-module.
$$
\begin{array}{llll}
 \mathcal{O}[[Y]]\stackrel{\psi_n}\longrightarrow & R_n&\stackrel{\varphi_n}{\longrightarrow} &T_n\\
&\downarrow &&\downarrow \\
& R&\stackrel{\varphi}{\longrightarrow}&T \\
 \end{array}
 $$
By a level $n$-structure we mean the above diagram with the
following properties:

$(1)$ There is a surjective homomorphism
$\lambda_n:\mathcal{O}[[X_1,X_2,...,X_t]]\longrightarrow R_n$.

 $(2)$ The ring homomorphism
 $R_n\longrightarrow T_n$ is surjective.

 $(3)$ From the vertical homomorphisms we get
 the isomorphisms $R_n/{\q}R_n\rightarrow R$ and
 $T_n/{\q}T_n\rightarrow T$.

 $(4)$ The ring $T_n/{\q^{[p^n]}}T_n$ is
 finite and free as a module over $\mathcal{O}[[Y]]/{\q^{[p^n]}}$.

 \vspace{0.05in}\noindent For convenience we represent a level $n$-structure
 by the notation $L_n(\mathcal{O},\varphi,\varphi_n,\psi_n,\lambda_n)$.


\end{Defn}

\begin{prop}\label{TS4} Let $m\geq 1$ be an integer such that for every $n\geq m$,
the level $n$-structure
$L_n(\mathcal{O},\varphi,\varphi_n,\psi_n,\lambda_n)$ has the
following additional property:

\noindent $(5)$ For each $x\in \m$ we have $\psi_n(x)\in \m R_n$.

Then $\varphi$ is an isomorphism between complete intersection
$\mathcal{O}$-algebras.
\end{prop}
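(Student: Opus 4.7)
The plan is to (a) reduce everything modulo $\m$ so we land in characteristic $p$ and Theorem~\ref{TS2} becomes applicable, (b) invoke it to produce an isomorphism $\bar\varphi\colon \bar R\to\bar T$ of complete intersection rings, and (c) lift back to $R\cong T$ using that $T$ is $\mathcal{O}$-flat, together with Nakayama.

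\emph{Reduction modulo $\m$.} Condition~(5) gives $\psi_n(\m)\subseteq\m R_n$, so the whole level-$n$-structure descends under $-\otimes_{\mathcal{O}}k$. Writing $\bar R_n$, $\bar T_n$, $\bar R$, $\bar T$ for the base changes, we obtain local $k$-algebra maps $\bar\psi_n\colon k[[Y]]\to\bar R_n$, $\bar\varphi_n\colon\bar R_n\twoheadrightarrow\bar T_n$, $\bar\lambda_n\colon k[[X_1,\dots,X_t]]\twoheadrightarrow\bar R_n$, with $\bar R_n/\q\bar R_n\cong\bar R$, $\bar T_n/\q\bar T_n\cong\bar T$, and by~(4) the ring $\bar T_n/\q^{[p^n]}\bar T_n$ is finite free over $k[[Y]]/\q^{[p^n]}$. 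Reducing~(4) one step further modulo $\q$ shows that $T=T_n/\q T_n$ is finite free over $\mathcal{O}[[Y]]/\q=\mathcal{O}$; in particular $T$ is $\mathcal{O}$-flat.

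\emph{Applying Theorem~\ref{TS2}.} Since $\bar\lambda_n$ is a surjection from the power series ring $k[[X]]$, we lift $\bar\psi_n$ to a local homomorphism $\tilde\psi_n\colon k[[Y]]\to k[[X]]$ with $\bar\lambda_n\tilde\psi_n=\bar\psi_n$ (pick preimages of the $\bar\psi_n(Y_i)$ inside $\m_{k[[X]]}$ and extend by the universal property). Apply Theorem~\ref{TS2} with $A=k[[Y_1,\dots,Y_t]]$, $\q=(Y_1,\dots,Y_t)$ (the maximal ideal, hence an SOP of the regular ring $A$), $Q=k[[X_1,\dots,X_t]]$, $B=\bar T_n$, $\eta=\tilde\psi_n$, and $\theta=\bar\varphi_n\bar\lambda_n$. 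The residue fields are all $k$ and $\theta$ is surjective. By Fact~\ref{F}, $A$ is $F$-regular, so $\q^*=\q$ and every $\q^{[q]}$ is tightly closed. Thus $d=\ell(B/\q^*B)=\ell(\bar T)$ is independent of $n$, while for $q=p^n$ we have $\ell(A/\q^{[q]})=q^t$ and the freeness from~(4) gives $\ell(\bar T_n/\q^{[q]}\bar T_n)\geq q^t$. Choose $n\geq m$ with $p^n>d^t t^{t-1}$; then all hypotheses are verified.

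\emph{Conclusion and lift.} Since $A$ is regular, the ``in particular'' clause of Theorem~\ref{TS2} produces an isomorphism $\pi\colon Q/\q Q \to \bar T_n/\q\bar T_n=\bar T$ of complete intersection rings. But $\pi$ factors as $Q/\q Q\twoheadrightarrow \bar R_n/\q\bar R_n=\bar R\xrightarrow{\bar\varphi}\bar T$, so both factors must be isomorphisms; in particular $\bar\varphi\colon\bar R\xrightarrow{\sim}\bar T$. Tensoring $0\to\ker\varphi\to R\to T\to 0$ with $k$ over $\mathcal{O}$ is exact by $\mathcal{O}$-flatness of $T$, and since $\bar\varphi$ is an isomorphism we get $\ker\varphi\otimes_{\mathcal{O}}k=0$, i.e.\ $\ker\varphi=\m\ker\varphi$. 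Since $R$ is Noetherian and $\m\subseteq\m_R$, Nakayama forces $\ker\varphi=0$, so $\varphi\colon R\xrightarrow{\sim}T$. Finally, $\bar T$ being a complete intersection makes $T$ (and hence $R$) a complete intersection $\mathcal{O}$-algebra by Definition~\ref{C}.

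\emph{Main obstacle.} The decisive choice is taking $A=k[[Y]]$ and $B=\bar T_n$ rather than $A=\bar R_n$: regularity of $A$ makes the tight-closure hypothesis of Theorem~\ref{TS2} automatic, and the required length inequality becomes exactly the freeness in~(4). The tension is that this trades away direct information about $\bar R$, but it is recovered because the isomorphism $\pi$ delivered by the theorem is forced to factor through $\bar R$ and so identifies $\bar\varphi$.
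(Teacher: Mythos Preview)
Your proof is correct and follows essentially the same approach as the paper: reduce modulo $\m$ using condition~(5), lift $\bar\psi_n$ through the surjection $k[[X]]\twoheadrightarrow\bar R_n$, apply Theorem~\ref{TS2} with $A=k[[Y]]$, $Q=k[[X]]$, $B=\bar T_n$ (using $F$-regularity of $A$ and freeness from~(4)), then factor the resulting isomorphism through $\bar R$ and lift via Nakayama. The only cosmetic differences are that you rederive the $\mathcal{O}$-freeness of $T$ from~(4) (it is already part of the definition of a level $n$-structure) and spell out the flatness/Nakayama step more explicitly than the paper does.
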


\begin{proof} Set
 $\widehat{\square}=\square\otimes_{\mathcal{O}} k$, $\q_n=\q^{[p^n]}$
 and $\mathcal{O}[[X]]=\mathcal{O}[[X_1,X_2,...,X_t]]$. From (1) there
 is a surjective homomorphism $k[[X]]\rightarrow
\widehat{R_n}$. The fifth property implies that
$\psi_n(\m[[Y]])\subseteq \m R_n$. Thus $\psi_n$ induces a local
homomorphism $\widehat{\psi_n}:k[[Y]]\longrightarrow \widehat{R_n}$.
Since $k[[X]]$ is a complete local ring, using \cite[(7.16)]{E} we
can lift the homomorphism $k[[Y]]\rightarrow \widehat{R_n}$ to a
unique homomorphism $k[[Y]]\rightarrow k[[X]]$. Thus from (1), (2)
and (3) we find that there is a surjective homomorphism as the
following
$$\alpha:k[[X]]/{\q}k[[X]]\longrightarrow \widehat{T_n}/{\q}\widehat{T_n}\simeq \widehat{T}.
$$
From (3) it follows that $\widehat{T_n}/{\q}\widehat{T_n}$ is a
finite vector space over $k$. We may assume that the length of
$\widehat{T_n}/{\q}\widehat{T_n}$ over $k[[X]]$ is equal $d$. Now
choose $n\geq m$ such that $p^n>t^{t-1}d^t$. From the fourth
property it is clear that the length of
$\widehat{T_n}/{\q_n}{\widehat{T_n}}$ over $k[[Y]]$ is equal or
greater than the length of $k[[Y]]/{\q_n}k[[Y]]$.

Now by writing $A=k[[Y]]$, $Q=k[[X]]$ and $B=\widehat{T_n}$ from
Theorem~\ref{TS2} and Fact~\ref{F} we get that $\alpha$ is an
isomorphism between complete intersection rings.

On the other hand, $\varphi_n$ is surjective, from this and the
third property we have the following surjective homomorphisms
$$\beta:k[[X]]/{\q}k[[X]]\longrightarrow \widehat{R_n}/{\q}\widehat{R_n}$$
$$\gamma:\widehat{R_n}/{\q}\widehat{R_n}\longrightarrow
\widehat{T_n}/{\q}\widehat{T_n}
$$
such that $\alpha=\gamma\beta$. It was shown that $\alpha$ is an
isomorphism thus from (3) it is clear that $\widehat{\varphi}$ is an
isomorphism between $\widehat{R}$ and $\widehat{T}$ as complete
intersection rings. Since $T$ is a finite free module and $\varphi$
is surjective, Nakayama's Lemma implies that $\varphi$ is an
isomorphism. It is obvious that the local ring $T$ is a homomorphic
image of $\mathcal{O}[[X]]$. Since $\widehat{T}$ is a complete
intersection ring from Definition~\ref{C} it follows that $T$ and so
$R$ are complete intersection $\mathcal{O}$-algebras.
\end{proof}

\begin{disc} In the proof of the above proposition the fifth property enables us
to reduce the level $n$-structure
$L_n(\mathcal{O},\varphi,\varphi_n,\psi_n,\lambda_n)$ to the level
$n$-structure
$L_n(k,\widehat{\varphi},\widehat{\varphi_n},\widehat{\psi_n},\widehat{\lambda_n})$
and then by lifting property which is based on \cite[(7.16)]{E} we
can prove the proposition as a simple corollary of our main theorem.
If we assume that $\mathcal{O}$ is $\m$-adic complete, then
$\mathcal{O}[[X]]$ is $(\m[[X]]+(X))$-adic complete. From (1) we
have a surjective homomorphism $\mathcal{O}[[X]]\longrightarrow
R_n$. Now again by using \cite[(7.16)]{E} we can lift the local
homomorphism $\psi_n$ to a unique local homomorphism
$\mathcal{O}[[Y]]\longrightarrow \mathcal{O}[[X]]$. Now by a similar
method to our proof we can show that $\varphi$ is an isomorphism
between complete intersection $\mathcal{O}$-algebras. In this case,
we don't need to use the fifth property directly. However, in the
following corollary as a simple application of the level
$n$-structures, we make clear the relation between the completeness
of $\mathcal{O}$ and the fifth property. In a sense, we can say that
the assumption of completeness of our base ring implies the fifth
property. We use the next simple lemma the proof of which is
omitted.
\end{disc}

\begin{lem}\label{TS5} Let $L_n(\mathcal{O},\varphi,\varphi_n,\psi_n,\lambda_n)$
be a level $n$-structure, then there is a level $n$-structure
$L'_n(\mathcal{O},\varphi, \varphi'_n, \psi'_n,\lambda'_n)$ such
that $\lambda'_n$ is the canonical epimorphism.
\end{lem}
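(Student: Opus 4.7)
The plan is to exploit the surjection $\lambda_n \colon \mathcal{O}[[X_1,\ldots,X_t]] \twoheadrightarrow R_n$ to rebuild $R_n$ as a canonical quotient of $\mathcal{O}[[X]]$, and then transport the rest of the data of the level $n$-structure along the resulting isomorphism. In other words, the content of the lemma is that the choice of presentation $\lambda_n$ can be absorbed into the definition of $R_n$ itself.

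First I would set $I_n := \ker(\lambda_n)$ and $R'_n := \mathcal{O}[[X_1,\ldots,X_t]]/I_n$. By the first isomorphism theorem, $\lambda_n$ factors as $\lambda_n = \bar\lambda_n \circ \lambda'_n$, where $\lambda'_n \colon \mathcal{O}[[X]] \onto R'_n$ is the canonical projection---which is the desired form for property (1)---and $\bar\lambda_n \colon R'_n \xrightarrow{\sim} R_n$ is an isomorphism of local $\mathcal{O}$-algebras.

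Next I would transport the rest of the diagram across $\bar\lambda_n$ by setting $\psi'_n := \bar\lambda_n^{-1} \circ \psi_n$ and $\varphi'_n := \varphi_n \circ \bar\lambda_n$, leaving $R$, $T$, $T_n$, $\varphi$ and the vertical arrow $T_n \to T$ unchanged, and replacing the vertical arrow $R_n \to R$ by its composition with $\bar\lambda_n$. Conjugating by the isomorphism $\bar\lambda_n$ automatically preserves commutativity of the diagram and surjectivity of $\varphi'_n$; the freeness condition (4) is untouched because it only involves $T_n$ and $\mathcal{O}[[Y]]/\q^{[p^n]}$, neither of which is modified; and the quotient identifications in property (3) transport along $\bar\lambda_n$ and the identity on $T_n$ because $\bar\lambda_n$ carries $\q R'_n$ onto $\q R_n$.

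The only routine bookkeeping item is verifying that $R'_n$ inherits the local $\mathcal{O}$-algebra structure with the appropriate maximal ideal; this is immediate since $\lambda_n$ is a local $\mathcal{O}$-algebra surjection and $\bar\lambda_n$ is an isomorphism. There is no real obstacle---the lemma is essentially a tautological normalization of the presentation $\lambda_n$, whose value is that in subsequent arguments one may unambiguously treat $\ker(\lambda'_n)$ as an ideal of $\mathcal{O}[[X]]$ rather than as the kernel of a map onto an abstractly given $R_n$.
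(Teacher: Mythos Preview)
Your argument is correct and is exactly the natural one: replace $R_n$ by the isomorphic quotient $\mathcal{O}[[X]]/\ker\lambda_n$ and transport the remaining data along the induced isomorphism. The paper itself omits the proof of this lemma, calling it ``simple'', so there is nothing to compare against; your write-up supplies precisely the routine verification the authors had in mind.
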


\begin{cor}\label{TS6} Let $(\mathcal{O},\m,k)$ be an $\m$-adic complete local ring, then
$\varphi$ is an isomorphism between complete intersection
$\mathcal{O}$-algebras.
\end{cor}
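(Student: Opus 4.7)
The plan is to verify the additional property $(5)$ of Proposition~\ref{TS4} using the completeness of $\mathcal{O}$, and then invoke that proposition directly.

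First I would apply Lemma~\ref{TS5} to replace the given family of level $n$-structures by ones in which each $\lambda_n\colon \mathcal{O}[[X]]\to R_n$ is the canonical epimorphism; this does not affect $\varphi$ or the rest of the data relevant to the conclusion. Since $\mathcal{O}$ is $\m$-adic complete, both $\mathcal{O}[[X]]$ and $\mathcal{O}[[Y]]$ are complete local rings, with maximal ideals $\m\mathcal{O}[[X]]+(X)$ and $\m\mathcal{O}[[Y]]+(Y)$ respectively.

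The key step is to apply the lifting tool \cite[(7.16)]{E} --- the same tool already used in the proof of Proposition~\ref{TS4} --- to the surjection $\lambda_n$ and the local $\mathcal{O}$-algebra homomorphism $\psi_n\colon \mathcal{O}[[Y]]\to R_n$. This yields a unique local $\mathcal{O}$-algebra homomorphism $\tilde{\psi}_n\colon \mathcal{O}[[Y]]\to \mathcal{O}[[X]]$ with $\lambda_n\circ\tilde{\psi}_n=\psi_n$. Because $\tilde{\psi}_n$ is an $\mathcal{O}$-algebra map, its restriction to $\mathcal{O}\subseteq \mathcal{O}[[Y]]$ is the structure map, so $\tilde{\psi}_n(\m)\subseteq \m\mathcal{O}[[X]]$; applying $\lambda_n$ then gives
$$\psi_n(\m)\ \subseteq\ \lambda_n(\m\mathcal{O}[[X]])\ =\ \m R_n,$$
which is precisely property $(5)$ for the level $n$-structure. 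With $(5)$ now available for every $n\geq 1$, Proposition~\ref{TS4} delivers at once that $\varphi$ is an isomorphism between complete intersection $\mathcal{O}$-algebras.

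The main obstacle is ensuring that the lift $\tilde{\psi}_n$ produced by \cite[(7.16)]{E} is genuinely an $\mathcal{O}$-algebra homomorphism (and not merely a ring homomorphism), because property~$(5)$ follows only via this extra compatibility. This is exactly where the hypothesis enters: the $\m$-adic completeness of $\mathcal{O}$ makes $\mathcal{O}[[X]]$ and $\mathcal{O}[[Y]]$ complete local $\mathcal{O}$-algebras, and the lifting \cite[(7.16)]{E} is carried out in that category, $\mathcal{O}[[X]]$ being its free object on $X_1,\dots,X_t$. Once this point is settled, the rest is formal from Lemma~\ref{TS5} and Proposition~\ref{TS4}.
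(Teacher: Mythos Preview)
Your proposal is correct and follows essentially the same route as the paper: use Lemma~\ref{TS5} to make $\lambda_n$ canonical, invoke completeness of $\mathcal{O}$ so that $\mathcal{O}[[X]]$ is complete, lift $\psi_n$ through $\lambda_n$ via \cite[(7.16)]{E} to an $\mathcal{O}$-algebra map, read off property~$(5)$, and apply Proposition~\ref{TS4}. The paper phrases the $\mathcal{O}$-linearity of the lift as ``$\xi(x)=x$ for each $x\in\m$'', which is exactly your observation that the lift is an $\mathcal{O}$-algebra homomorphism.
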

\begin{proof} From Lemma~\ref{TS5} it follows that there is a level
$n$-structure such that the homomorphism
$\lambda_n:\mathcal{O}[[X]]\longrightarrow \mathcal{O}[[X]]/J_n=R_n$
is a canonical epimorphism. Therefore for each $g\in
\mathcal{O}[[X]]$ we have $\lambda_n(g)=g+J_n$, where
$J_n=\Ker\lambda_n$. Since $\mathcal{O}$ is $\m$-adic complete hence
$\mathcal{O}[[X]]$ is $(\m[[X]]+(X))$-adic complete. Thus from
\cite[(7.16)]{E} we can lift $\psi_n$ to a unique homomorphism
$\xi:\mathcal{O}[[Y]]\longrightarrow \mathcal{O}[[X]]$ such that
$\xi(x)=x$ for each $x\in \m$. On the other hand, we have
$\lambda_n\xi=\psi_n$. Hence $\psi_n(x)=x+J_n$ and this shows that
$\psi_n(x)\in \m R_n$. Now from Proposition~\ref{TS4} assertion
holds.
\end{proof}

\section*{Acknowledgments}
The author is grateful to Sean Sather-Wagstaff and Irena Swanson for
their useful comments on this note.

\providecommand{\bysame}{\leavevmode\hbox
to3em{\hrulefill}\thinspace}
\providecommand{\MR}{\relax\ifhmode\unskip\space\fi MR }
\providecommand{\MRhref}[2]{%
  \href{http://www.ams.org/mathscinet-getitem?mr=#1}{#2}
} \providecommand{\href}[2]{#2}

\end{document}